\documentclass[11pt,a4paper]{article}
\usepackage{a4wide,amsfonts,amsmath,latexsym,amsthm,amssymb,euscript,eufrak,graphicx,units,mathrsfs,setspace}

\usepackage{float}

\usepackage[francais,english]{babel}

\newtheorem{prop}{Proposition}[section]
\newtheorem{remark}[prop]{Remark}
\newtheorem{theorem}[prop]{Theorem}
\newtheorem{lemma}[prop]{Lemma}

\newtheorem{definition}[prop]{Definition}

\author{Monique Jeanblanc\footnote{Universit\'e dÕEvry Val dÕEssonne, Laboratoire Analyse et Probabilit\'es,
 \texttt{monique.jeanblanc@univ-evry.fr}} \and Anthony R\'eveillac\footnote{CEREMADE UMR CNRS 7534, Universit\'e Paris Dauphine, \texttt{anthony.reveillac@ceremade.dauphine.fr}}}

\title{A Note on BSDEs with singular driver coefficients}

\begin{document}

\maketitle

\begin{abstract}
In this note we study a class of BSDEs which admits a particular singularity in the driver. More precisely, we assume that the driver is not integrable and degenerates when approaching to the terminal time of the equation.
\end{abstract}

\section{Introduction}

Since the seminal works of Bismut \cite{mjarBismut_78} and of Pardoux and Peng \cite{mjarPardoux_Peng90}, a lot of attention has been given to the study of Backward Stochastic Differential Equations (BSDEs) as this object naturally arises in stochastic control problems and was  found to be an ad hoc tool for many financial applications as illustrated in the famous guideline paper \cite{mjarElKaroui_Peng_Quenez}. Recall that a BSDE takes the following form:
$$ Y_t=Y_T -\int_t^T f(s,Y_s,Z_s) ds -\int_t^T Z_s dW_s, \quad t\in [0,T],$$
where $W$ is a multi-dimensional Brownian motion. The historical natural assumption for providing existence and uniqueness (in the appropriate spaces) is to assume the driver $f$ to be Lipschitz plus some integrability conditions on the terminal condition. However, in applications one may deal with drivers which are not Lipschitz continuous, and which exhibit e.g. a quadratic growth in $z$ (in the context of incomplete markets in Finance), or only some monotonicity in the $y$-variable. One way of relaxing the Lipschitz growth condition in $y$ is the so-called stochastic Lipschitz assumption which basically consists in replacing the usual Lipschitz constant by a stochastic process satisfying appropriate integrability conditions. As noted in Section 2.1.2 "Pathology" in \cite{mjarElKaroui_redbook}, even in the stochastic linear framework, one has to be very careful when relaxing the integrability conditions on the driver of the equation. As an illustration, consider the following example presented in \cite{mjarElKaroui_redbook} (cf. \cite[(2.9)]{mjarElKaroui_redbook}):
\begin{equation}
\label{mjareq:EKred}
Y_t =0+\int_t^T [r Y_s +\sigma Z_s + \gamma Y_s (e^{\gamma (T-s)}-1)^{-1}] ds +\int_t^T Z_s dW_s, \quad t\in [0,T],
\end{equation}
where $W$ is a one-dimensional Brownian motion, $r, \sigma, \gamma>0$ and $T$ is a fixed positive real number. It is proved in \cite{mjarElKaroui_redbook} that the BSDE \eqref{mjareq:EKred} has an infinite number of solutions. Note that here the driver is not Lipschitz continuous in $y$ due to the exploding term $(e^{\gamma (T-t)}-1)^{-1}$ as $t$ goes to $T$, and completely escapes the existing results of the literature.\\\\
\noindent
The aim of this note is to elaborate on the pathology mentioned in \cite{mjarElKaroui_redbook} and to try to understand better what kind of behavior can appear as soon as the usual integrability conditions are relaxed. In light of Example \eqref{mjareq:EKred}, multiple solutions is one of the behaviour which can be observed. However, is it the only type of problem that can occur ? For instance is it clear that existence is guaranteed? This note is an attempt in this direction and is motivated by the work in preparation \cite{mjarJNR} where equations with this specific pathology appear naturally in the financial application under interest in \cite{mjarJNR}.\\\\
\noindent
We proceed as follows. First we make precise the context of our study and we explain what is the notion of solution we use for dealing with non-integrable drivers. Then we deal with the particular case of affine equations in Section \ref{mjarsection:affine}. These equations already allow us to present several type of pathologic behaviour. We then study in Section \ref{mjarsection:nonlinear} a class of non-linear drivers which will be of interest for a specific financial application presented in \cite{mjarJNR}.  In particular, in our main result Theorem \ref{mjarth:expo2} we provide an existence and uniqueness result under a monotonicty assumption on the mapping $f$ in \eqref{mjareq:proto} defined below.

\section{Preliminaries}

In this note $T$ denotes a fixed positive real number and $d$ a given positive integer. We set $(W_t)_{t\in [0,T]}:=(W_t^1,\ldots,W_t^d)_{t\in [0,T]}$ a $d$-dimensional standard Brownian motion defined on a filtered probability space $(\Omega,\mathcal{F}, \mathbb{F}:=(\mathcal{F}_t)_{t\in [0,T]},\mathbb{P})$ where $\mathbb{F}$ denotes the natural filtration of $W$ (completed and right-continuous) and $\mathcal{F}=\mathcal{F}_T$. Throughout this paper "$\mathbb{F}$-predictable" (rep. $\mathbb{F}$-adapted) processes will be referred to predictable (resp. adapted) processes. For later use we set for $p\geq 1$:
$$ \mathbb{S}^{p}:=\left\{(Y_t)_{t\in [0,T]} \textrm{ continuous adapted one dimensional process }, \, \mathbb{E}\left[\sup_{t\in [0,T]} |Y_t|^p\right]<+\infty \right\}, $$
$$ \mathbb{H}^{p}(\mathbb{R}^m):=\left\{(Z_t)_{t\in [0,T]} \textrm{ predictable } m\textrm{-dimensional process }, \, \mathbb{E}\left[\left(\int_0^T \|Z_t\|^2 dt\right)^{p/2}\right]<+\infty \right\},$$
where $\|\cdot\|$ denotes the Euclidian norm on $\mathbb{R}^m$ ($m\geq 1$). For any element $Z$ of $\mathbb{H}^1(\mathbb{R}^d)$, we set $\int_0^\cdot Z_s dW_s:=\sum_{i=1}^d \int_0^\cdot Z_s^i dW_s^i$. We also set $L^p:=L^p(\Omega,\mathcal{F}_T,\mathbb{P})$.\\\\
\noindent
Let $\lambda:(\lambda_t)_{t\in [0,T]}$ be a one-dimensional non-negative predictable process. For convenience we set $\Lambda_t:=\int_0^t \lambda_s ds, \; t\in [0,T].$ We make the following \\\\
\textbf{Standing assumption on $\lambda$}:
$$ \Lambda_t<+\infty, \; \forall t<T, \; \textrm{ and } \Lambda_T=+\infty, \; \mathbb{P}-a.s.$$
The typical example we have in mind is a coefficient $\lambda$ of the form $\lambda_t:=(e^{\gamma (T-t)}-1)^{-1}$ as in the introducing example \eqref{mjareq:EKred}, or when $\lambda$ is the intensity process related to a prescribed random time $\tau$ in the context of enlargement of filtration as presented in \cite{mjarJNR}. In this note, we aim in studying BSDEs of the form:
\begin{equation}
\label{mjareq:proto}
Y_t=A-\int_t^T [\varphi_s + \lambda_s f(Y_s)] ds -\int_t^T Z_s dW_s, \quad t\in [0,T],
\end{equation}
where $A$ is a regular enough $\mathcal{F}_T$-measurable random variable, $f:\mathbb{R} \to \mathbb{R}$ is a deterministic map and $\varphi$ is a predictable processes with some integrability conditions to be specified. Before going further, we would like to stress that in contradistinction to the classical case where $\lambda$ is bounded (and $A$ and $\varphi$ are square-integrable), the space $\mathbb{S}^{2}\times \mathbb{H}^{2}(\mathbb{R}^d)$ is no more the natural space for solutions of our BSDEs. For instance if $f(x):=x$, the fact that $(Y,Z)$ belongs to $\mathbb{S}^{2}\times \mathbb{H}^{2}(\mathbb{R}^d)$ does not guarantee that
$$ \mathbb{E}\left[ \int_0^T |\lambda_s Y_s|^p ds\right]<+\infty$$
for some $p\geq 1$ (which would be  immediately satisfied with $p=2$ if $\lambda$ were bounded) leading to a possible definition problem for the term $\int_0^t \lambda_s Y_s ds$ in equation \eqref{mjareq:proto}. For this reason we make very precise the notion of solution in our context.

\begin{definition}[Solution]
\label{mjarsolution}
Let $A$ be an element of $L^1$ and $f:\Omega\times [0,T] \times \mathbb{R} \times \mathbb{R}^d \to \mathbb{R}$ such that for any $(y,z)$ in $\mathbb{R}\times \mathbb{R}^d$ the stochastic process $(t,\omega) \mapsto f(t,y,z)$ (where as usual we omit the $\omega$-variable in the expression of $f$) is progressively measurable. We say that a pair of predictable processes $(Y,Z)$ with values in $\mathbb{R}\times\mathbb{R}^d$ is a solution to the BSDE
\begin{equation}
\label{mjareq:BSDEtheo1}
Y_t=A-\int_t^T f(s,Y_s,Z_s) ds -\int_t^T Z_s dW_s, \quad t\in [0,T],
\end{equation}
if
\begin{equation}
\label{mjareq:BSDEtheo2}
\mathbb{E}\left[\int_0^T |f(t,Y_t,Z_t)| dt + \left(\int_0^T \|Z_t\|^2 dt\right)^{1/2}\right]<+\infty,
\end{equation}
and
Relation \eqref{mjareq:BSDEtheo1} is satisfied for any $t$ in $[0,T]$, $\mathbb{P}$-a.s.
\end{definition}

\begin{remark}
\label{mjarrk:D}
This notion of solution is related to the theory of $L^1$-solution (see e.g. \cite[Definition 2.1]{mjarBriandDelyonHuPardouxStoica} or \cite{mjarConfortolaFuhrmanJacod,mjarmjarConfortolaFuhrmanJacod_multijumps}) where in Relation \eqref{mjareq:BSDEtheo2} the expectation is replaced by a $\mathbb{P}$-a.s. criterion. The fact that $Z$ is an element of $\mathbb{H}^1(\mathbb{R}^d)$ implies that the martingale $\int_0^\cdot Z_s dW_s$ is uniformly integrable. Combining this property with the $(\Omega \times [0,T],\mathbb{P}\otimes dt)$-integrability of $f(\cdot,Y,Z)$, it immediately follows that the solution process $Y$ is of class (D) (which then finds similarities with the notion of solution used in \cite{mjarBriandDelyonHuPardouxStoica}).
\end{remark}

\begin{remark}
We would like to stress that even in the case where the terminal condition $A$ is in $L^2$ we do not require $Y$ to be an element of $\mathbb{S}^2$. This fact bears some similarities with the papers \cite{mjarConfortolaFuhrmanJacod,mjarmjarConfortolaFuhrmanJacod_multijumps} and with \cite[Section 6]{mjarBriandDelyonHuPardouxStoica}.
\end{remark}

\begin{remark}[Classical $L^2$ setting]
If $f$ is uniformly (in time) Lipschitz in $(y,z)$ and if $\mathbb{E}\left[|A|^2+\int_0^T |f(s,0,0)|^2 ds\right]<+\infty$,  then the fact that there exists $(Y,Z)$ in $\mathbb{S}^{2}\times \mathbb{H}^{2}(\mathbb{R}^d)$ satisfying \eqref{mjareq:BSDEtheo1} implies that the process $f(\cdot,Y_\cdot,Z_\cdot)$ is in $\mathbb{H}^2(\mathbb{R}^d)$ and thus Relation \eqref{mjareq:BSDEtheo2} is satisfied.
\end{remark}

Another important issue in our context is uniqueness. The uniqueness for the $Z$ component will be understood in the $\mathbb{H}^1(\mathbb{R}^d)$ sense. Concerning the $Y$ component, since we do not impose $Y$ to belong to $\mathbb{S}^1$ we will say that $Y^1=Y^2$ if the processes are indistinguishable (by definition of a solution, both processes are continuous, and hence uniqueness boils down to require $Y^1$ to be a modification of $Y^2$). This definition for uniqueness in our very special setting coincides with the notion of uniqueness with respect to a particular norm. More precisely, according to Remark \ref{mjarrk:D} a solution process $Y$ is of class (D). This space can be naturally equipped with the norm $\|\cdot \|_{(D)}$ defined as\footnote{This norm is referred as $\|\cdot\|_1$ in \cite[Definition VI.20]{mjarDellacherieMeyer_2}, we do not use this notation here to avoid any confusion.}:
$$ \|X\|_{(D)}:=\sup_{\tau \in \cal T} \mathbb{E}[|X_\tau|], \quad X \textrm{ of class } (D),$$
where $\mathcal{T}$ denotes the set of stopping time smaller or equal to $T$. By \cite[Theorem IV.86]{mjarDellacherieMeyer_1}, uniqueness with respect to the norm $\|\cdot \|_{(D)}$ is equivalent to indistinguishability.
\\\\
From now on, by solution to a BSDE we mean a solution in the sense of Definition \ref{mjarsolution}. For any pair of ($\mathcal{F}_T$-measurable) random variables $(A,B)$, we write $A\not\equiv B$ if $\mathbb{P}[A \neq B]>0$. Similarly, $A=B$, $\mathbb{P}$-a.s. will be denoted as $A \equiv B$. Throughout this paper $C$ will denote a generic constant which can differ from line to line.

\section{Affine equations with exploding coefficients}
\label{mjarsection:affine}

As the reader will figure out later, it seems pretty complicated to define a general theory since many situations (non-existence, non-uniqueness) can be found under our assumption on $\lambda$ for BSDEs of the form \eqref{mjareq:proto}. These very different behaviours can be clearly illustrated by studying affine equations, that is when $f$ in \eqref{mjareq:proto} stands for the identity (or minus the identity). In some sense, our results find immediate counterparts in the deterministic realm while considering the corresponding ODEs when all the coefficients of the equation are deterministic. However, for this latter case, techniques of time-reversion can be employed to provide immediate results which unfortunately can not be applied in the stochastic framework due to the measurability feature of the solution to a BSDE calling for different techniques.\\\\
\noindent
In this section, we consider stochastic affine BSDEs of one of the following forms:

\begin{equation}
\label{mjareq:l1}
Y_t=A- \int_t^T (\varphi_s -\lambda_s Y_s) ds -\int_t^T Z_s dW_s; \quad t\in [0,T],
\end{equation}

\begin{equation}
\label{mjareq:l2}
Y_t=A- \int_t^T (\varphi_s +\lambda_s Y_s) ds -\int_t^T Z_s dW_s; \quad t\in [0,T],
\end{equation}

\noindent
We start with Equation \eqref{mjareq:l1}.
\begin{prop}
\label{mjarprop:aff-}
Let $A$ be in $L^1$ and $\varphi:=(\varphi_t)_{t\in [0,T]}$ be  an element of $\mathbb{H}^1(\mathbb{R})$. The Brownian BSDE
\begin{equation}
\label{mjareq:aff-}
dY_t= (\varphi_t-\lambda_t Y_t) dt + Z_t dW_t; \quad Y_T=A.
\end{equation}
admits no solution if $A\not\equiv
0$. If $A\equiv 0$, the BSDE \eqref{mjareq:aff-} may admit infinitely many solutions.
\end{prop}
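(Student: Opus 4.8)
The plan is to treat the two assertions separately: non-existence for $A\not\equiv 0$ will follow purely from the integrability built into Definition \ref{mjarsolution} together with the standing assumption $\Lambda_T=+\infty$, while the non-uniqueness claim for $A\equiv 0$ will be an explicit construction.

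\emph{Non-existence when $A\not\equiv 0$.} Suppose $(Y,Z)$ solves \eqref{mjareq:aff-}, i.e.\ \eqref{mjareq:l1}, so the driver is $f(s,Y_s,Z_s)=\varphi_s-\lambda_sY_s$. Since $\varphi\in\mathbb{H}^1(\mathbb{R})$, Cauchy--Schwarz gives $\mathbb{E}[\int_0^T|\varphi_t|\,dt]\le\sqrt{T}\,\mathbb{E}[(\int_0^T\varphi_t^2\,dt)^{1/2}]<+\infty$, and condition \eqref{mjareq:BSDEtheo2} gives $\mathbb{E}[\int_0^T|\varphi_t-\lambda_tY_t|\,dt]<+\infty$; by the triangle inequality $\mathbb{E}[\int_0^T\lambda_t|Y_t|\,dt]<+\infty$, hence $\int_0^T\lambda_t|Y_t|\,dt<+\infty$ $\mathbb{P}$-a.s. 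On the other hand, reading off \eqref{mjareq:l1}, $Y$ admits a continuous version with $Y_T=A$ (the right-hand side of \eqref{mjareq:l1} is continuous in $t$ thanks to \eqref{mjareq:BSDEtheo2}), which is the only routine point to make rigorous. On the event $\{A\neq 0\}$, continuity of $Y$ at $T$ yields a (random) $\delta>0$ with $|Y_t|\ge|A|/2$ for all $t\in[T-\delta,T]$, whence $\int_0^T\lambda_t|Y_t|\,dt\ge\frac{|A|}{2}\big(\Lambda_T-\Lambda_{T-\delta}\big)=+\infty$, using $\Lambda_{T-\delta}<+\infty$ and $\Lambda_T=+\infty$ $\mathbb{P}$-a.s. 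This forces $\mathbb{P}[A\neq 0]=0$, contradicting $A\not\equiv 0$; hence no solution exists in that case.

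\emph{Non-uniqueness when $A\equiv 0$.} Here I would exhibit a one-parameter family of solutions in the simplest instance. Take $\varphi\equiv 0$ and $\lambda$ any process satisfying the standing assumption (for instance $\lambda_t=(e^{\gamma(T-t)}-1)^{-1}$), so that $A\equiv 0$. For $c\in\mathbb{R}$ set $Y^c_t:=c\,e^{-\Lambda_t}$ and $Z^c\equiv 0$. Since $\Lambda$ is continuous, adapted and increases to $\Lambda_T=+\infty$, the process $Y^c$ is continuous, adapted, with $Y^c_T=0$; moreover $\int_0^T\lambda_t|Y^c_t|\,dt=|c|\int_0^T\lambda_te^{-\Lambda_t}\,dt=|c|\,(1-e^{-\Lambda_T})=|c|<+\infty$, so \eqref{mjareq:BSDEtheo2} holds. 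Finally $A-\int_t^T(\varphi_s-\lambda_sY^c_s)\,ds-\int_t^TZ^c_s\,dW_s=c\int_t^T\lambda_se^{-\Lambda_s}\,ds=c\,e^{-\Lambda_t}=Y^c_t$, so each $(Y^c,Z^c)$ solves \eqref{mjareq:l1}. Since $Y^c_0=c$, these solutions are pairwise distinct, proving the claim. (More generally, affinity of \eqref{mjareq:l1} shows that whenever $A\equiv 0$ and a solution $(Y^0,Z^0)$ exists for a general $\varphi\in\mathbb{H}^1(\mathbb{R})$, the pairs $(Y^0+c\,e^{-\Lambda_\cdot},Z^0)$, $c\in\mathbb{R}$, are all solutions.)

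I do not expect a genuinely hard step. The first assertion is essentially the observation that the $dt$-integrability of the driver imposed by Definition \ref{mjarsolution}, combined with $\Lambda_T=+\infty$, is incompatible with $Y_T\neq 0$; the delicate point is merely the passage to a continuous version of $Y$, which is standard. The second assertion is a direct verification on $c\,e^{-\Lambda_\cdot}$, which is nothing but the solution of the homogeneous equation $dY_t=-\lambda_tY_t\,dt$ automatically annihilated at $T$ by the explosion of $\Lambda$.
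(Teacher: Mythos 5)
Your proof is correct and takes essentially the same route as the paper: the non-existence part is the identical argument that \eqref{mjareq:BSDEtheo2} forces $\int_0^T\lambda_t|Y_t|\,dt<\infty$ a.s.\ while continuity of $Y$ at $T$ on $\{A\neq 0\}$ makes this integral infinite because $\Lambda_T=+\infty$. For the non-uniqueness part you use the same homogeneous family $Y^c=c\,e^{-\Lambda}$, $Z^c\equiv 0$; the paper illustrates the claim with $\varphi_t=e^{-\Lambda_t}$ and a particular solution added to that family, but since the statement only asserts that infinitely many solutions \emph{may} occur, your simpler choice $\varphi\equiv 0$ is equally sufficient.
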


\begin{proof}
\textbf{Step 1: non-existence of solution if $A\not\equiv 0$\\\\}
Let $(Y,Z)$ be a solution to \eqref{mjareq:aff-}. Assume there exists a set $\mathcal{A}$ in $\mathcal{F}_T$ such that $A>0$ on $\mathcal{A}$. By definition of a solution, it holds that
\begin{equation}
\label{mjareq:tempaff}
\int_0^T |\lambda_s Y_s| ds <\infty, \quad \mathbb{P}\textrm{-a.s.}
\end{equation}
since $$\int_0^T |\lambda_s Y_s| ds \leq \int_0^T |\varphi_s - \lambda_s Y_s| ds+\int_0^T |\varphi_s| ds.$$
For $\omega$ in $\cal A$, let $t_0(\omega):=\sup \{t\in [0,T], \; Y_t(\omega) <A/2\}$. By continuity of $Y$ and the fact that $Y_T=A$, for $\mathbb{P}$-almost all $\omega$ in $\mathcal{A}$, $t_0(\omega)<T$ and $Y_t(\omega) \textbf{1}_{[t_0(\omega),T]}(t) \geq A/2$. Note that $t_0$ is not a stopping time but only a $\mathcal{F}_T$-measurable random variable. As a consequence, on $\cal A$, it holds that
$$ \int_0^T | \lambda_s Y_s| ds \geq  \int_{t_0}^T \lambda_s Y_s ds \geq A/2 \underset{=+\infty}{\underbrace{\int_{t_0}^T \lambda_s ds}},$$
which contradicts \eqref{mjareq:tempaff}.
As a consequence, $A\leq 0$, $\mathbb{P}$-a.s.. Similarly, one proves that $A\geq 0$, $\mathbb{P}$-a.s..\\\\
\textbf{Step 2: Multiplicity of solutions if $A \equiv 0$\\\\}
If $A\equiv 0$, we will provide examples of non-uniqueness of solution. Remark that if $(\mathcal{Y}, \mathcal{Z})$ is a particular solution to the BSDE and that $(Y,Z)$ is a solution to the (fundamental) BSDE:
\begin{equation}
\label{mjareq:fond-}
dY_t =-\lambda_t Y_t dt + Z_t dW_t, \quad Y_T=0,
\end{equation}
then as for ODEs, the sum of any of these fundamental solutions and $\mathcal{Y}$ is a solution to \eqref{mjareq:aff-} (together with the sum of the associated $Z$ processes).
In addition, Equation \eqref{mjareq:fond-} admits an infinite number of solutions (like $Y_t=Y_0 e^{-\Lambda_t}$ and $Z\equiv 0$ which is an adapted continuous solution to the BSDE satisfying $\mathbb{E}\left[\int_0^T \lambda_s |Y_s| ds\right]=|Y_0|$ for any chosen real number $Y_0$).
An example of particular solution can be given by the process $\mathcal{Y}_t:=- \mathbb{E}\left[\int_t^{T} e^{\int_t^s \lambda_u du} \varphi_s
ds\vert \mathcal{F}_t \right]$ if it is well-defined, such that Relation \eqref{mjareq:BSDEtheo2} is satisfied. In that case, the existence of $\mathbb{E}\left[\int_t^T \varphi_s e^{\Lambda_s } ds \vert \mathcal{F}_t\right]$ entails that it converges to $0$ as $t$ goes to $T$, and hence that $\mathcal{Y}_T=0$. One can check that $\mathcal{Y}$ together with the process $\mathcal{Z}:=\tilde Z
e^{-\Lambda}$ is solution to \eqref{mjareq:aff-}, where $\tilde Z$ is such that $ \mathbb{E} \left[\int_0^T
\varphi_s e^{\Lambda_s} ds \vert \mathcal{F}_t\right] =
\mathbb{E}\left[ \int_0^T \varphi_s e^{\Lambda_s} ds\right] -\int_0^t
\tilde Z_s dW_s$ ($t\in [0,T]$).
We conclude the proof with an example: set $\varphi_t:=e^{-\Lambda_t}$. With this choice, the process $\mathcal{Y}$ satisfies all the requirements above providing infinitely many solutions to \eqref{mjareq:aff-}.
\end{proof}

Note that in the previous proof the non-existence when $A\not\equiv 0$ relies on the assumption that $\int_0^T |\varphi_s| ds<+\infty, \;\mathbb{P}-a.s.$ If the latter is not satisfied, one may find existence of solutions for $A \not\equiv 0$ as the following proposition illustrates in the deterministic case.
\begin{prop}
\label{mjarprop:ODE}
\label{mjarprop:affdet-} Let $A$ be a given constant and
$\varphi:=(\varphi_t)_{t\in [0,T]}$ be a deterministic map. We
assume that $\lambda$ is a deterministic function such that
$\Lambda_t=\int_0^t \lambda _sds<+\infty$, for $t<T$, and  $\int_0^T
\lambda _sds=+\infty$. Then
\begin{itemize}
\item[(i)] If $e^{-\Lambda_t} \int_0^t e^{\Lambda _s}\varphi _s ds$ converges
to $C$ when $t$ goes to $T$, then the ODE
\begin{equation}
\label{mjareq:affdet-} dY_t= (\varphi_t-\lambda_t Y_t) dt  ; \quad
Y_T=A.
\end{equation}
admits no solution if $A\neq C$. If $A=C$, the ODE
\eqref{mjareq:affdet-} admits infinitely many solutions given by $Y_t=e^{-\Lambda_t} \left(Y_0+
\int_0^t e^{\Lambda _s}\varphi _s ds\right)$ provided that $\int_0^T |\varphi_t - \lambda_t Y_t| dt<\infty$.
\item[(ii)] If
$e^{-\Lambda_t} \int_0^t e^{\Lambda _s}\varphi _s ds$ does not
converge, the ODE \eqref{mjareq:affdet-} has no solution.
\end{itemize}
 \end{prop}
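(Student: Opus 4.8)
The plan is to use that \eqref{mjareq:affdet-} is a \emph{linear} first-order ODE: on the open interval $[0,T)$ every solution is forced to be the variation-of-constants solution, and the three alternatives of the statement then correspond to the behaviour of that explicit formula as $t\uparrow T$, where the coefficient $\lambda$ is singular. First I would fix the (deterministic analogue of the) notion of solution: $Y$ is a solution on $[0,T]$ if $\int_0^T|\varphi_s-\lambda_sY_s|\,ds<+\infty$ and $Y_t=A-\int_t^T(\varphi_s-\lambda_sY_s)\,ds$ for all $t\in[0,T]$, which makes $Y$ absolutely continuous on $[0,T]$. For $t<T$ one has $\int_0^t\lambda_s|Y_s|\,ds\le\bigl(\sup_{[0,t]}|Y|\bigr)\Lambda_t<+\infty$, hence $\varphi\in L^1_{loc}([0,T))$ and $s\mapsto e^{\Lambda_s}\varphi_s$ is integrable on each $[0,t]$ with $t<T$ (the convergence hypothesis in (i)/(ii) presupposes the latter near $T$ anyway).

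Next I would introduce the integrating factor $e^{\Lambda_\cdot}$. Since $\Lambda$ is absolutely continuous on $[0,t]$ with $\Lambda'=\lambda$ a.e., both $e^{\Lambda_\cdot}$ and $e^{\Lambda_\cdot}Y$ are absolutely continuous on $[0,t]$ and, a.e.,
\[
\tfrac{d}{ds}\bigl(e^{\Lambda_s}Y_s\bigr)=e^{\Lambda_s}\lambda_sY_s+e^{\Lambda_s}(\varphi_s-\lambda_sY_s)=e^{\Lambda_s}\varphi_s .
\]
Integrating from $0$ to $t$ yields the representation
\begin{equation}
\label{mjareq:VoC}
Y_t=e^{-\Lambda_t}\Bigl(Y_0+\int_0^t e^{\Lambda_s}\varphi_s\,ds\Bigr),\qquad t\in[0,T),
\end{equation}
valid for \emph{any} solution $Y$.

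Now I would pass to the limit $t\uparrow T$ in \eqref{mjareq:VoC}. By the standing assumption $\Lambda_t\to+\infty$, so $e^{-\Lambda_t}Y_0\to0$; combined with the continuity of $Y$ at $T$ and $Y_T=A$, this forces $e^{-\Lambda_t}\int_0^t e^{\Lambda_s}\varphi_s\,ds\to A$ as $t\uparrow T$. Part (ii) follows at once: if this quantity does not converge, no solution exists. So does the non-existence half of (i): if it converges to $C$ with $C\ne A$ we reach the contradiction $C=A$. For the converse in (i), assume $A=C$, pick any $Y_0\in\mathbb{R}$, define $Y$ on $[0,T)$ by \eqref{mjareq:VoC} and set $Y_T:=A$; under the stated proviso $\int_0^T|\varphi_s-\lambda_sY_s|\,ds<+\infty$, differentiating \eqref{mjareq:VoC} gives $Y'_s=\varphi_s-\lambda_sY_s$ on $[0,T)$, the convergence hypothesis (with $e^{-\Lambda_t}Y_0\to0$) gives $Y_t\to A=Y_T$, and integrating $Y'$ from $t$ to $T$ — legitimate since the integrand is in $L^1([0,T])$ by the proviso — gives $Y_t=A-\int_t^T(\varphi_s-\lambda_sY_s)\,ds$. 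Hence each such $Y$ is a solution, and since $Y_0$ is free there are infinitely many; note the proviso is independent of $Y_0$ because two such solutions differ by a constant multiple of $s\mapsto e^{-\Lambda_s}$, and $\int_0^T\lambda_se^{-\Lambda_s}\,ds=1-e^{-\Lambda_T}=1<+\infty$.

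The computations here are elementary; the only point needing care is that $\varphi$ need not be integrable up to $T$, so the integrating-factor identity and \eqref{mjareq:VoC} must be derived on the open interval $[0,T)$, and the terminal value must be recovered separately at $t=T$, using either the convergence hypothesis or (for the direct implications) the integrability built into the definition of a solution. I expect this endpoint bookkeeping to be essentially the only obstacle.
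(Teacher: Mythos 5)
Your proof is correct and complete. Note that the paper itself offers no proof of this proposition (it is stated as an elementary ODE fact and followed immediately by Remark~\ref{mjarrk:34}); your integrating-factor/variation-of-constants argument is precisely the reasoning the explicit formula in the statement presupposes, and your two points of care --- deriving the representation only on $[0,T)$ and recovering the endpoint via continuity plus the $L^1$ proviso, and checking that the proviso is independent of $Y_0$ because $\int_0^T \lambda_s e^{-\Lambda_s}\,ds = 1$ --- are exactly the details worth writing down.
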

\begin{remark}
\label{mjarrk:34}
Note that the assumption in (i) of Proposition \ref{mjarprop:affdet-} when $C=0$, can be met only if $\int_0^T |\varphi_s| ds=+\infty$. Indeed, assume that $\int_0^T |\varphi_s| ds<\infty$. Let $\varepsilon>0$ and $t<T$. We have:
\begin{align*}
e^{-\Lambda_t} \left| \int_0^t e^{\Lambda_s} \varphi_s ds \right|&\leq e^{-\Lambda_t} \int_0^{t-\varepsilon} e^{\Lambda_s} |\varphi_s| ds +  e^{-\Lambda_t} \int_{t-\varepsilon}^t e^{\Lambda_s} |\varphi_s| ds\\
&\leq e^{-\Lambda_t} e^{\Lambda_{t-\varepsilon}} \int_0^T |\varphi_s| ds + \int_{t-\varepsilon}^t |\varphi_s| ds.
\end{align*}
Hence as $t$ goes to $T$, we have that
$ \lim_{t\to T} e^{-\Lambda_t} \left|\int_0^t e^{\Lambda_s} \varphi_s ds\right| \leq \int_{T-\varepsilon}^T |\varphi_s| ds,$
and hence
$$ \lim_{t\to T} e^{-\Lambda_t} \left|\int_0^t e^{\Lambda_s} \varphi_s ds\right| =0,$$
which contradicts the assumption of (i).
\end{remark}

\begin{remark}
Since $\lambda$ is unbounded, assuming $A, \lambda$ and $\varphi$ to be deterministic in Equation \eqref{mjareq:l1} does not lead to deterministic solutions (and so differs from the ODE framework of Proposition \ref{mjarprop:ODE}) as the following example illustrates. Assume $A\equiv 0$, $\varphi\equiv 0$ and $\lambda$ is a deterministic mapping. Then for any element $\beta:=(\beta_t)_{t\in [0,T]}$ in $\mathbb{H}^1(\mathbb{R}^d)$, the pair of adapted processes $(Y,Z)$ defined as:
$$ Y_t:=Y_0 e^{-\Lambda_t} + e^{-\Lambda_t} \int_0^t \beta_s dW_s, \quad Z_t:=e^{-\Lambda_t} \beta_t,\quad t\in [0,T]$$
is a solution to \eqref{mjareq:l1}. This provides in turn a generalization of the fundamental solution to Equation \eqref{mjareq:fond-}.
\end{remark}

We continue with the BSDE:
$$ dY_t= (\varphi_t+\lambda_t Y_t) dt + Z_t dW_t; \quad Y_T=A.$$

\begin{prop}
\label{mjarprop:affine1}
Let $A$ be in $L^1$ and $\varphi:=(\varphi_t)_{t\in[0,T]}$ be a bounded predictable process. The Brownian BSDE
\begin{equation}
\label{mjareq:affine1}
dY_t= (\varphi_t + \lambda_t Y_t) dt + Z_t dW_t; \quad Y_T=A.
\end{equation}
admits no solution unless $A\equiv 0$. If $A \equiv 0$, then the BSDE admits a unique solution.
\end{prop}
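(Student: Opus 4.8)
The plan is to solve this linear equation explicitly through the integrating factor $e^{-\Lambda_\cdot}$; because $\lambda$ enters \eqref{mjareq:affine1} with a $+$ sign, this factor \emph{decays} to zero at $T$, and that single fact is what drives all three assertions (non-existence for $A\not\equiv 0$, existence and uniqueness for $A\equiv 0$). This is in contrast with Equation \eqref{mjareq:l1}, where the relevant integrating factor is $e^{+\Lambda_\cdot}$ and the homogeneous equation retains nontrivial solutions.

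First I would take an arbitrary solution $(Y,Z)$ and apply the integration-by-parts formula to $e^{-\Lambda_t}Y_t$; since $e^{-\Lambda_\cdot}$ is continuous of finite variation there is no covariation term, and using \eqref{mjareq:affine1} one obtains $d(e^{-\Lambda_t}Y_t)=e^{-\Lambda_t}\varphi_t\,dt+e^{-\Lambda_t}Z_t\,dW_t$. Integrating between $t$ and $r<T$ and letting $r\uparrow T$: the left-hand side tends to $0$ (the solution $Y$ is continuous with $Y_T=A$ finite a.s. and $\Lambda_T=+\infty$), the drift term converges to $\int_t^T e^{-\Lambda_s}\varphi_s\,ds$, which is finite since $\int_0^T e^{-\Lambda_s}|\varphi_s|\,ds\le\|\varphi\|_\infty T$, and the stochastic integral converges because $e^{-2\Lambda_s}\le 1$ and $Z\in\mathbb{H}^1(\mathbb{R}^d)$ force $\int_0^\cdot e^{-\Lambda_s}Z_s\,dW_s$ to be an $\mathbb{H}^1$-, hence uniformly integrable, martingale. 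Conditioning on $\mathcal{F}_t$ then yields the representation
$$Y_t=-e^{\Lambda_t}\,\mathbb{E}\Big[\int_t^T e^{-\Lambda_s}\varphi_s\,ds\ \Big|\ \mathcal{F}_t\Big],\qquad t\in[0,T).$$

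Pulling the $\mathcal{F}_t$-measurable factor $e^{\Lambda_t}$ inside the conditional expectation and using $\Lambda_t\le\Lambda_s$ for $s\ge t$, this representation immediately gives the a priori bound $|Y_t|\le\|\varphi\|_\infty(T-t)$, so $Y_t\to 0$ as $t\uparrow T$ and therefore $A=Y_T\equiv 0$; this proves the non-existence statement. When $A\equiv 0$, the same representation shows any solution carries the prescribed $Y$, hence two solutions have indistinguishable (continuous) $Y$-components, and since $e^{-\Lambda_\cdot}Y_\cdot-\int_0^\cdot e^{-\Lambda_s}\varphi_s\,ds$ is then a determined martingale, uniqueness in the martingale representation theorem forces its integrand $e^{-\Lambda_\cdot}Z_\cdot$, and hence $Z$, to be determined $dt\otimes d\mathbb{P}$-a.e. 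For existence I would run the construction backwards: set $M_t:=\mathbb{E}[\int_0^T e^{-\Lambda_s}\varphi_s\,ds\mid\mathcal{F}_t]$, a bounded martingale with $M_t=M_0+\int_0^t\widetilde{Z}_s\,dW_s$, and define $\widetilde{Y}_t:=-\big(M_t-\int_0^t e^{-\Lambda_s}\varphi_s\,ds\big)$, $Y_t:=e^{\Lambda_t}\widetilde{Y}_t$, $Z_t:=-e^{\Lambda_t}\widetilde{Z}_t$; integration by parts shows $(Y,Z)$ solves \eqref{mjareq:affine1}, and $|Y_t|\le\|\varphi\|_\infty(T-t)$ yields $Y_T=0=A$.

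The only genuine estimate, and the step I expect to be the main obstacle, is verifying the integrability requirement \eqref{mjareq:BSDEtheo2} despite the blow-up of $\lambda$ at $T$. For the drift one bounds $\lambda_s|Y_s|\le\|\varphi\|_\infty\,\mathbb{E}\big[\lambda_s e^{\Lambda_s}\!\int_s^T e^{-\Lambda_u}\,du\mid\mathcal{F}_s\big]$, takes expectations, applies Fubini--Tonelli together with the tower property, and computes $\int_0^T\lambda_s e^{\Lambda_s}\int_s^T e^{-\Lambda_u}\,du\,ds=\int_0^T(1-e^{-\Lambda_u})\,du\le T$, so that $\mathbb{E}\big[\int_0^T\lambda_s|Y_s|\,ds\big]\le\|\varphi\|_\infty T<\infty$; together with $\int_0^T|\varphi_s|\,ds\le\|\varphi\|_\infty T$ this settles the drift. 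For $Z$ one writes $\int_0^\cdot Z_s\,dW_s=Y_\cdot-Y_0-\int_0^\cdot(\varphi_s+\lambda_s Y_s)\,ds$; the right-hand side has $\mathbb{E}[\sup_{t\in[0,T]}|\cdot|]<\infty$ by the bound $|Y_t|\le\|\varphi\|_\infty(T-t)$ and the previous estimate, so it is a uniformly integrable continuous martingale, and the Burkholder--Davis--Gundy inequality gives $\mathbb{E}\big[(\int_0^T\|Z_s\|^2\,ds)^{1/2}\big]<\infty$, i.e. $Z\in\mathbb{H}^1(\mathbb{R}^d)$. Thus the entire argument ultimately rests on the a priori bound $|Y_t|\le\|\varphi\|_\infty(T-t)$, which is precisely what compensates the singularity of $\lambda$ near the terminal time.
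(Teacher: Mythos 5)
Your proposal is correct and follows essentially the same route as the paper: the integrating factor $e^{-\Lambda}$, the resulting representation $Y_t=-\mathbb{E}[\int_t^T e^{-\int_t^s\lambda_u du}\varphi_s\,ds\mid\mathcal{F}_t]$ giving the bound $|Y_t|\le\|\varphi\|_\infty(T-t)$ and hence $A\equiv 0$, the construction of $Z$ via martingale representation, the Fubini/integration-by-parts estimate for $\mathbb{E}[\int_0^T\lambda_s|Y_s|\,ds]$, and the BDG argument for $Z\in\mathbb{H}^1(\mathbb{R}^d)$. Your packaging of the drift estimate as $\int_0^T(1-e^{-\Lambda_u})\,du\le T$ is a marginally tidier form of the paper's limit computation, but the argument is the same.
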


\begin{proof}
Let $(Y,Z)$ be a solution and set $\tilde{Y}:=Y e^{-\Lambda}-\int_0^\cdot e^{-\Lambda_s} \varphi_s ds$. We have that
$$d\tilde{Y}_t = e^{-\Lambda_t} Z_t dW_t, \; \textrm{ and } \tilde{Y}_T=-\int_0^T e^{-\Lambda_s} \varphi_s ds. $$
Hence $\tilde{Y}$ is a $L^1$-martingale and
$$ \tilde{Y}_t =-\mathbb{E}\left[\int_0^T e^{-\Lambda_s} \varphi_s ds \vert \mathcal{F}_t\right], \quad t\in [0,T],$$
leading to
\begin{equation}
\label{mjareq:Yfacil}
Y_t=-\mathbb{E}\left[\int_t^T e^{-\int_t^s \lambda_u du} \varphi_s ds \vert \mathcal{F}_t\right], \quad t\in [0,T].
\end{equation}
In particular, $Y_T=0$. Indeed, since $\varphi$ is bounded
$$e^{ \Lambda_t}\left\vert \int_t^T e^{-\Lambda_s} \varphi_s
ds\right\vert \leq e^{ \Lambda_t} e^{-\Lambda_t} \int_t^T |\varphi_s| ds\leq \|\varphi\|_\infty (T-t).$$
This proves
that there is no solution to the equation unless $A\equiv 0$. We now assume that $A \equiv 0$. In that case, we prove that the process given by
\eqref{mjareq:Yfacil} together with a suitable process $Z$ is a solution to the BSDE. We begin with the integrability condition
$$ \mathbb{E}\left[\int_0^T \vert \lambda_s Y_s \vert ds\right] <+\infty.$$
We have
\begin{align*}
\mathbb{E}\left[\int_0^T |\lambda_s Y_s| ds\right]&=\mathbb{E}\left[\int_0^T \left|\lambda_s \mathbb{E}\left[\int_s^T e^{-\int_s^u \lambda_r dr} \varphi_u du \vert \mathcal{F}_s\right] \right| ds\right]\\
&\leq \mathbb{E}\left[\int_0^T \lambda_s e^{\Lambda_s} \int_s^T e^{-\Lambda_u} |\varphi_u| du ds\right]\\
&=\mathbb{E}\left[ \lim_{s \to T, s<T} [e^{\Lambda_s} \int_s^T e^{-\Lambda_u} |\varphi_u| du] - \int_0^T e^{-\Lambda_u} |\varphi_u| du + \int_0^T e^{\Lambda_s} e^{-\Lambda_s} |\varphi_s| ds \right]\\
&<+\infty,
\end{align*}
where we have used the estimate $e^{-\Lambda_u} \leq e^{-\Lambda_s}$ for $u\geq s$. We now turn to the definition of the $Z$ process in the equation. Consider the $L^2$ martingale $\hat L$ defined as:
$$ \hat L_t:=\mathbb{E}\left[ \int_0^T e^{-\Lambda_s } \varphi_s ds \vert \mathcal{F}_t\right], \quad t\in [0,T].$$
By the martingale representation theorem, there exists a process $\hat Z$ in $\mathbb{H}^2(\mathbb{R}^d)$ such that $\hat L_t=\hat L_0+\int_0^t \hat Z_s dW_s$. Now let $Z_t:=-e^{\Lambda_t} \hat Z_t$ and $L_t:=\int_0^t Z_s dW_s$ which is a local martingale. With this definition, it is clear that the pair $(Y,Z)$ has the dynamics:
$$ dY_t=(\varphi_t +\lambda_t Y_t) dt +Z_t dW_t, \quad t\in [0,T].$$
Note that a priori $\int_0^\cdot Z_s dW_s$ is only a local martingale.
From the equation, there exists a constant $C>0$ such that
$$ \mathbb{E}\left[\sup_{t\in [0,T]} \left|\int_0^t Z_s dW_s\right|\right] \leq C \left(2\mathbb{E}[\sup_{t\in [0,T]} |Y_t|] + T\|\varphi\|_{\infty} + \mathbb{E}\left[\int_0^T |\lambda_s Y_s| ds\right]\right) <\infty,$$
since by definition $Y$ is bounded.
Hence $Z$ is an element of $\mathbb{H}^1(\mathbb{R}^d)$ by Burkholder-Davis-Gundy's inequality.
Finally note that this argument provides uniqueness of the solution since we have characterized any
solution via the process $\tilde Y$.
\end{proof}

\begin{remark}
Up to a Girsanov transformation, the previous result can be generalized to equations of the form:
$$Y_t=A- \int_t^T (\varphi_s + \sigma_t Z_t-\lambda_s Y_s) ds -\int_t^T Z_s dW_s; \quad t\in [0,T],$$
$$Y_t=A- \int_t^T (\varphi_s + \sigma_t Z_t +\lambda_s Y_s) ds -\int_t^T Z_s dW_s; \quad t\in [0,T],$$
where $\sigma:=(\sigma_t)_{t\in [0,T]}$ is any bounded predictable process. In particular, our results contain the motivating example \eqref{mjareq:proto} from \cite{mjarElKaroui_redbook}.
\end{remark}

\section{A class of non-linear equations}
\label{mjarsection:nonlinear}

From the results of Section \ref{mjarsection:affine} it appears clearly that there is no hope to provide a general theory for equations of the form \eqref{mjareq:proto} with a non-integrable coefficient $\lambda$. However, motivated by financial applications, we need to prove that the particular equation \eqref{mjareq:proto} with $f(x):=\alpha^{-1}(1-e^{-\alpha x})$ admits a unique solution if and only if $Y_T=0$. In addition, in order to provide a complete answer to the financial problem associated to this equation, we need to prove that the process $Y$ is bounded and that the martingale $\int_0^\cdot Z_s dW_s$ is a BMO-martingale (whose definition will be recalled below). This section is devoted to the study of a class of equations which generalizes this particular case. We start with a generalization of Proposition \ref{mjarprop:aff-}.

\begin{prop}
\label{mjarprop:expo1}
Let $\varphi$ be an element of $\mathbb{H}^1(\mathbb{R})$
and $A$ in $L^1$. Let $f:\mathbb{R} \to \mathbb{R}$ be an increasing (respectively decreasing) map with $f(0)=0$. The BSDE
\begin{equation}
\label{mjareq:nonlin1}
Y_t=A-\int_t^T [\varphi_s +\lambda_s f(Y_s)] ds -\int_t^T Z_s dW_s, \quad t\in [0,T]
\end{equation}
admits no solution if $A\not\equiv 0$.
\end{prop}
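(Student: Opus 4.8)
The plan is to extend Step~1 of the proof of Proposition~\ref{mjarprop:aff-}, which is exactly the case $f(x)=-x$, to an arbitrary (strictly) monotone $f$. First I would record the only integrability facts needed. Since $\varphi\in\mathbb{H}^1(\mathbb{R})$, the Cauchy--Schwarz inequality gives $\mathbb{E}\big[\int_0^T|\varphi_s|\,ds\big]\le\sqrt{T}\,\mathbb{E}\big[(\int_0^T|\varphi_s|^2\,ds)^{1/2}\big]<\infty$, so $\int_0^T|\varphi_s|\,ds<\infty$ $\mathbb{P}$-a.s. If $(Y,Z)$ is a solution then, by Definition~\ref{mjarsolution}, the driver $s\mapsto\varphi_s+\lambda_s f(Y_s)$ is $\mathbb{P}\otimes dt$-integrable, in particular $\int_0^T|\varphi_s+\lambda_s f(Y_s)|\,ds<\infty$ $\mathbb{P}$-a.s., so the triangle inequality yields
\begin{equation}
\label{mjareq:plankeyint}
\int_0^T|\lambda_s f(Y_s)|\,ds<+\infty,\qquad\mathbb{P}\text{-a.s.}
\end{equation}
Recall also that, being a solution, $Y$ is continuous with $Y_T=A$.

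Next I would argue by contradiction. Suppose $\mathbb{P}[A>0]>0$ and fix $\delta>0$ with $\mathbb{P}[A>\delta]>0$. Exactly as in Proposition~\ref{mjarprop:aff-}, set, for $\omega$ in $\mathcal{A}:=\{A>\delta\}$, $t_0(\omega):=\sup\{t\in[0,T]:\ Y_t(\omega)\le\delta/2\}$ (with $\sup\emptyset:=0$); this is an $\mathcal{F}_T$-measurable random variable, and by continuity of $Y$ together with $Y_T=A>\delta$ we have, for $\mathbb{P}$-a.e. $\omega\in\mathcal{A}$, that $t_0(\omega)<T$ and $Y_t(\omega)\ge\delta/2$ for all $t\in[t_0(\omega),T]$. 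Treat the increasing case (the decreasing one is symmetric). Since $f$ is increasing with $f(0)=0$ we get $f(Y_t(\omega))\ge f(\delta/2)\ge0$ on $[t_0(\omega),T]$, and in fact $f(\delta/2)>0$ since $f$ is strictly monotone with $f(0)=0$ (the case of interest, cf. the motivating $f(x)=\alpha^{-1}(1-e^{-\alpha x})$). Hence, by the standing assumption $\Lambda_{t_0(\omega)}<+\infty$, $\Lambda_T=+\infty$,
$$\int_0^T|\lambda_s f(Y_s)|\,ds\ \ge\ \int_{t_0(\omega)}^T\lambda_s f(Y_s)\,ds\ \ge\ f(\delta/2)\,\big(\Lambda_T-\Lambda_{t_0(\omega)}\big)\ =\ +\infty$$
on $\mathcal{A}$, contradicting \eqref{mjareq:plankeyint}. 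Thus $A\le0$ $\mathbb{P}$-a.s.; running the same argument on $\{A<-\delta\}$ (where $Y_t\le-\delta/2$ near $T$, so $f(Y_t)\le f(-\delta/2)<0$ and $|f(-\delta/2)|>0$) gives $A\ge0$ $\mathbb{P}$-a.s., whence $A\equiv0$. In the decreasing case one replaces $f(\delta/2)>0$ by $-f(\delta/2)>0$ throughout.

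The argument is short and essentially pathwise; the only delicate point is the step $f(\delta/2)\neq0$, which is where the hypothesis genuinely enters: a monotone $f$ with $f(0)=0$ that also vanished on a one-sided neighbourhood of $0$ would admit solutions for $A$ small enough, so strict monotonicity is what is really used. A second, purely cosmetic point is that $t_0$ is not a stopping time, but --- as already exploited in the proof of Proposition~\ref{mjarprop:aff-} --- only its $\mathcal{F}_T$-measurability matters, since the blow-up of $\int_{t_0}^T\lambda_s\,ds$ is invoked $\omega$ by $\omega$.
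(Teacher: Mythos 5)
Your proof is correct and is essentially the argument the paper intends: its own proof of Proposition \ref{mjarprop:expo1} is just the one-line remark that it ``follows the lines'' of Propositions \ref{mjarprop:aff-} and \ref{mjarprop:affine1}, and your write-up is a faithful elaboration of Step~1 of Proposition \ref{mjarprop:aff-} (the non-integrability of $\lambda_s f(Y_s)$ near $T$ when $Y_T\not\equiv 0$), with the same $\mathcal{F}_T$-measurable time $t_0$ and the same pathwise blow-up of $\Lambda$. The only (welcome) added value is your observation that this single integrability contradiction handles both the increasing and the decreasing case at once, and your explicit identification of strict monotonicity at $0$ (i.e.\ $f(\pm\delta/2)\neq 0$) as the hypothesis that is genuinely used.
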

\begin{proof}
The proof follows the lines of the one of Proposition \ref{mjarprop:aff-} and of Proposition \ref{mjarprop:affine1}.
\end{proof}

\begin{remark}
Note that the previous result does not contradict the conclusion of Proposition \ref{mjarprop:affdet-} in the deterministic setting, since according to Remark \ref{mjarrk:34} the assumption of (i) in Proposition \ref{mjarprop:affdet-} on $\lambda$ is not compatible with the $\mathbb{H}^1(\mathbb{R})$-requirement of Proposition \ref{mjarprop:expo1}.
\end{remark}

The following lemma will be of interest for proving the main result of this section.

\begin{lemma}
\label{mjarlemma:expo}
Let $f:\mathbb{R} \to \mathbb{R}$ satisfying $f(0)=0$, $f$ is non-decreasing and $f(x)- x\leq 0, \; \forall x\in \mathbb{R}$. Then the equation
\begin{equation}
\label{mjareq:sansphi}
Y_t=0-\int_t^T \lambda_s f(Y_s) ds -\int_t^T Z_s dW_s, \quad t\in [0,T]
\end{equation}
admits $(0,0)$ as unique solution.
\end{lemma}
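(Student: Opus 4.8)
The plan is to check that $(0,0)$ solves \eqref{mjareq:sansphi}, which is immediate since $f(0)=0$, and then to prove that every solution coincides with it; the uniqueness is the whole content. The only structural input I will use is that $\lambda$ is non-negative (standing assumption) and that $f(0)=0$ together with the monotonicity of $f$ forces $\operatorname{sgn}(x)\,f(x)\ge 0$, i.e. $x f(x)\ge 0$, for every $x\in\mathbb R$. The sublinearity hypothesis $f(x)-x\le 0$ is not needed here.

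Let $(Y,Z)$ be a solution. By Definition~\ref{mjarsolution}, $t\mapsto\int_0^t\lambda_s f(Y_s)\,ds$ is a.s. continuous of finite variation and $\int_0^\cdot Z_s\,dW_s$ is a continuous local martingale, so $Y$ is a continuous semimartingale, and Tanaka's formula yields
$$ |Y_t| \;=\; |Y_0| + \int_0^t \operatorname{sgn}(Y_s)\,\lambda_s f(Y_s)\,ds + L_t^0 + \int_0^t \operatorname{sgn}(Y_s)\,Z_s\,dW_s, $$
where $L^0$ is the non-decreasing local time of $Y$ at $0$. Since $\operatorname{sgn}(Y_s)\lambda_s f(Y_s)\ge 0$, the process $|Y|$ is a non-negative continuous local submartingale with $|Y_T|=0$. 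To exploit this I would first localize: set $\sigma_n:=\inf\{t:\int_0^t\|Z_s\|^2\,ds\ge n\}\wedge T$, a stopping time with $\sigma_n\uparrow T$ and $\sigma_n=T$ eventually a.s. (because $Z\in\mathbb H^1(\mathbb R^d)$, so $\int_0^T\|Z_s\|^2\,ds<+\infty$ a.s.); then $\int_0^{\cdot\wedge\sigma_n}\operatorname{sgn}(Y_s)Z_s\,dW_s$ is a true martingale. Writing the identity above between $t\wedge\sigma_n$ and $\sigma_n$, taking expectations, and discarding the two non-negative finite-variation increments gives $\mathbb E\bigl[|Y_{t\wedge\sigma_n}|\bigr]\le\mathbb E\bigl[|Y_{\sigma_n}|\bigr]$; every term is integrable because $Y$ is of class (D) (Remark~\ref{mjarrk:D}) and $\lambda f(Y)$ is $\mathbb P\otimes dt$-integrable by \eqref{mjareq:BSDEtheo2}.

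Now I would let $n\to\infty$. Using that $Y$ is of class (D) — hence $\{|Y_{t\wedge\sigma_n}|\}_n$ and $\{|Y_{\sigma_n}|\}_n$ are uniformly integrable — together with the a.s. (indeed eventually stationary) convergences $Y_{t\wedge\sigma_n}\to Y_t$ and $Y_{\sigma_n}\to Y_T=0$, one gets $\mathbb E[|Y_t|]\le\mathbb E[|Y_T|]=0$, hence $Y_t=0$ a.s. for every $t\in[0,T]$, so $Y\equiv 0$ by continuity. Plugging this into \eqref{mjareq:sansphi} yields $\int_t^T Z_s\,dW_s=0$ for all $t$, so the martingale $\int_0^\cdot Z_s\,dW_s$ is identically $0$, its quadratic variation $\int_0^\cdot\|Z_s\|^2\,ds$ vanishes, and $Z\equiv 0$ in $\mathbb H^1(\mathbb R^d)$. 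Since $f(0)=0$, the pair $(0,0)$ trivially satisfies \eqref{mjareq:sansphi} and the integrability requirement \eqref{mjareq:BSDEtheo2}, so it is the unique solution.

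The step I expect to be the main obstacle is turning the \emph{local} submartingale property of $|Y|$ into the honest inequality $\mathbb E[|Y_t|]\le\mathbb E[|Y_T|]$: one must localize so that the stochastic integral genuinely has zero expectation and then justify the passage to the limit, and it is exactly here that the class (D) property of $Y$ is indispensable (without an a priori estimate on $Y$ one could not even take expectations). Everything else — Tanaka's formula, the sign bookkeeping, and reading off $Z\equiv 0$ from the equation once $Y\equiv 0$ — is routine.
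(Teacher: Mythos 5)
Your proof is correct, but it follows a genuinely different route from the paper's. The paper's argument is a two-sided comparison built on the exponential change of variable $\tilde Y:=e^{-\Lambda}Y$: since $d\tilde Y_t=\lambda_te^{-\Lambda_t}\left(f(Y_t)-Y_t\right)dt+e^{-\Lambda_t}Z_t\,dW_t$ and $f(x)-x\le 0$, the process $\tilde Y$ is a (local) supermartingale with $\tilde Y_T=0$, whence $Y\ge 0$; then, because $f$ is non-decreasing with $f(0)=0$, one has $f(Y)\ge 0$ and the representation $Y_t=-\mathbb{E}\bigl[\int_t^T\lambda_sf(Y_s)\,ds\,\vert\,\mathcal{F}_t\bigr]\le 0$, giving $Y\equiv 0$. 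You instead apply Tanaka's formula to $|Y|$ and observe that $\operatorname{sgn}(x)f(x)\ge 0$ (a consequence of monotonicity and $f(0)=0$ alone) makes $|Y|$ a non-negative local submartingale vanishing at $T$; the localization by $\sigma_n$ and the class (D) property (Remark \ref{mjarrk:D}) then give $\mathbb{E}[|Y_t|]\le\mathbb{E}[|Y_T|]=0$. Both arguments must at some point turn a local (super/sub)martingale inequality into a true expectation inequality, and your handling of that step — stopping at the times where $\int_0^\cdot\|Z_s\|^2\,ds$ reaches level $n$, then using uniform integrability — is the careful version of what the paper leaves implicit. What your approach buys is a slightly stronger statement: the hypothesis $f(x)-x\le 0$ is indeed never used, so the lemma holds for any non-decreasing $f$ with $f(0)=0$ (consistent, e.g., with $f(x)=2x$, where Proposition \ref{mjarprop:affine1} with $\varphi\equiv 0$ also gives uniqueness). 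What the paper's route buys is the intermediate sign information $Y\ge 0$ and the supermartingale mechanism that is reused almost verbatim in Step 1 of Theorem \ref{mjarth:expo2}, where the bound $f(x)\le x$ genuinely matters; your Tanaka argument would not transplant there as directly.
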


\begin{proof}
It is clear that $(0,0)$ solves \eqref{mjareq:sansphi}. Let $(Y,Z)$ be any solution and $\tilde Y:=e^{-\Lambda} Y$. It holds that $\tilde Y_T=0$ and that
$$ d\tilde Y_t =  \lambda_t e^{-\Lambda_t} (-Y_t+f(Y_t)) dt + e^{-\Lambda_t} Z_t dW_t.$$
Since $  f(x)-x \leq 0$, for all $x\in  \mathbb R$, $\tilde{Y}_t \geq 0.$
Hence by definition, $Y_t\geq 0, \; \forall t\in [0,T]$, $\mathbb{P}$-a.s. From Equation \eqref{mjareq:sansphi}, since $Y\geq 0$ we have that $f(Y_t)\geq0$ which implies that
$$ Y_t =0-\mathbb{E}\left[\int_t^T \lambda_s f(Y_s) ds \vert \mathcal{F}_t \right]  \leq 0, \quad \forall t\in [0,T], \; \mathbb{P}-a.s.$$
As a consequence $Y_t= 0$ for all $t$, $\mathbb{P}$-a.s. which in turn gives $Z = 0$ (in $\mathbb{H}^1(\mathbb{R}^d)$), which concludes the proof.
\end{proof}

We now consider a class of nonlinear BSDEs.

\begin{theorem}
\label{mjarth:expo2}
Let $\varphi$ be a non-negative bounded predictable process and $f:\mathbb{R} \to \mathbb{R}$ a continuously differentiable map satisfying: $f(0)=0$, $f$ is non-decreasing, there exists $\delta>0$ such that
$$f(x)-x\leq 0, \; \forall x\in \mathbb{R} \textrm{ and }  f'(x) \geq \delta, \; \forall x \leq 0.$$
Assume in addition that $\mathbb{E}[\Lambda_t]<+\infty, \; \forall t<T$. Then the BSDE
\begin{equation}
\label{mjareq:expo1}
Y_t=A-\int_t^T (\varphi_s +\lambda_s f(Y_s)) ds -\int_t^T Z_s dW_s, \quad t\in [0,T].
\end{equation}
admits a solution  if and only if $A \equiv 0$. In that case, the solution is unique, $Y$ is bounded and $\int_0^\cdot Z_s dW_s$ is a BMO-martingale, that is:
$$ \mathrm{esssup}_{\tau \in \mathcal{T}}\mathbb{E}\left[\int_\tau^T \|Z_s\|^2 ds \vert \mathcal{F}_\tau \right] <\infty,$$
where we recall that $\mathcal{T}$ denotes the set of stopping time smaller or equal to $T$.
\end{theorem}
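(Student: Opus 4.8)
The plan is to split the statement into three parts: (1) necessity of $A \equiv 0$; (2) existence and uniqueness when $A \equiv 0$; (3) boundedness of $Y$ and the BMO property of $\int_0^\cdot Z_s dW_s$. For part (1), I would invoke Proposition \ref{mjarprop:expo1}: since $\varphi$ is bounded and hence in $\mathbb{H}^1(\mathbb{R})$, $f$ is increasing with $f(0)=0$, and the hypotheses of Theorem \ref{mjarth:expo2} are stronger, no solution exists if $A \not\equiv 0$. (One should double-check that the sign of $\varphi$ plays no role here, which it does not, since the argument there only uses $\mathbb{P}$-a.s.\ finiteness of $\int_0^T|\varphi_s|\,ds$.) For uniqueness when $A \equiv 0$: if $(Y^1,Z^1)$ and $(Y^2,Z^2)$ are two solutions, their difference $\bar Y := Y^1 - Y^2$ solves $d\bar Y_t = \lambda_t (f(Y^1_t) - f(Y^2_t))\,dt + \bar Z_t dW_t$ with $\bar Y_T = 0$. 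Writing $f(Y^1_t) - f(Y^2_t) = \kappa_t \bar Y_t$ with $\kappa_t := \int_0^1 f'(Y^2_t + u\bar Y_t)\,du \geq 0$, and setting $K_t := \int_0^t \lambda_s \kappa_s\,ds$, one would like $\bar Y_t = -\mathbb{E}[\int_t^T e^{-\int_t^s \lambda_u \kappa_u du} \cdot 0 \, ds \mid \mathcal{F}_t] = 0$; the delicate point is justifying that $e^{-K}\bar Y$ is a true (not just local) martingale, i.e.\ an $L^1$-martingale, which follows from the class (D) property of solutions (Remark \ref{mjarrk:D}) together with $\kappa \geq 0$ (so $e^{-K} \leq 1$ keeps the relevant integrals controlled). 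Hence $\bar Y \equiv 0$ and then $\bar Z \equiv 0$ in $\mathbb{H}^1(\mathbb{R}^d)$.

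For existence, the natural route is a fixed-point / monotone approximation. One approach: define a Picard-type iteration or, more robustly given the singularity, exploit the structure $f(x) - x \leq 0$. I would first establish a priori bounds: if $(Y,Z)$ solves \eqref{mjareq:expo1} with $A \equiv 0$, then arguing as in Lemma \ref{mjarlemma:expo} applied to $e^{-\Lambda}Y$ and using $\varphi \geq 0$, one gets $Y_t \geq 0$ (the drift $\lambda_t e^{-\Lambda_t}(f(Y_t) - Y_t) + e^{-\Lambda_t}\varphi_t$, upon the substitution $\tilde Y = e^{-\Lambda}Y$, splits into a non-positive-sign part in $Y$ and a non-negative forcing $\varphi$, forcing $\tilde Y \geq 0$). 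For the upper bound, compare with the affine equation $d\bar Y_t = (\varphi_t + \lambda_t \bar Y_t)\,dt + \bar Z_t dW_t$, $\bar Y_T = 0$: by Proposition \ref{mjarprop:affine1} this has the explicit bounded solution $\bar Y_t = \mathbb{E}[\int_t^T e^{-\int_t^s \lambda_u du}\varphi_s\,ds \mid \mathcal{F}_t]$ (sign adjusted), and since $f(Y) \leq Y$ a comparison argument gives $0 \leq Y_t \leq \bar Y_t \leq T\|\varphi\|_\infty$. This simultaneously yields boundedness of $Y$. To actually construct the solution, I would use the fact that on the relevant range $y \geq 0$ the map $f$ is Lipschitz-equivalent-to-identity from above and bounded below, and run a truncation: replace $\lambda$ by $\lambda \wedge n$, solve the (now classical, bounded-coefficient) BSDE to get $(Y^n, Z^n)$, derive uniform-in-$n$ bounds $0 \leq Y^n \leq T\|\varphi\|_\infty$ and a uniform BMO bound on $\int Z^n dW$ (see below), and pass to the limit; the hypothesis $\mathbb{E}[\Lambda_t] < \infty$ for $t < T$ is what lets one control $\int_0^T \lambda_s f(Y_s)\,ds$ and verify the integrability condition \eqref{mjareq:BSDEtheo2} in the limit, together with $\Lambda_T = +\infty$ forcing $Y_T = 0$ exactly as in Proposition \ref{mjarprop:aff-}.

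For the BMO property, the key is an $L^2$-type estimate localized at stopping times. Applying Itô to $|Y_t|^2$ (or to $Y_t^2$, using $Y \geq 0$) between a stopping time $\tau$ and $T$ and taking conditional expectations, the martingale term drops and one obtains
$$ \mathbb{E}\Big[\int_\tau^T \|Z_s\|^2 ds \,\Big|\, \mathcal{F}_\tau\Big] = \mathbb{E}\Big[ -Y_\tau^2 + 2\int_\tau^T Y_s(\varphi_s + \lambda_s f(Y_s))\,ds \,\Big|\, \mathcal{F}_\tau\Big]. $$
Here the crucial sign observation is $f(x) \leq x$ combined with $x \geq 0$ giving $0 \leq f(Y_s) \leq Y_s$, hence $Y_s \lambda_s f(Y_s) \leq \lambda_s Y_s^2$; but this is still a singular term. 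The better estimate uses $-Y_\tau^2$: actually I would instead bound $\int_\tau^T Y_s \lambda_s f(Y_s)\,ds$ using $Y_s f(Y_s) \leq Y_s^2$ and the already-known bound $Y_s \leq T\|\varphi\|_\infty =: M$ and $\int_\tau^T \lambda_s Y_s f(Y_s) ds$ — rewrite $\lambda_s f(Y_s) ds$ as $dY_s - \varphi_s ds - Z_s dW_s$ and integrate $Y_s$ against it, which after Itô gives back the same identity; so the honest route is: from the equation, $\int_\tau^T \lambda_s f(Y_s)\,ds = Y_\tau - \int_\tau^T \varphi_s ds - \int_\tau^T Z_s dW_s$, and since $0 \leq f(Y_s) \leq Y_s \leq M$, one controls $\mathbb{E}[\int_\tau^T \lambda_s f(Y_s)ds \mid \mathcal{F}_\tau] \leq M + T\|\varphi\|_\infty$; plug this into the Itô identity to get $\mathbb{E}[\int_\tau^T \|Z_s\|^2 ds \mid \mathcal{F}_\tau] \leq 2M(M + T\|\varphi\|_\infty) + M^2$, uniformly in $\tau$, which is exactly the BMO bound. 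The main obstacle I anticipate is not any single estimate but the existence construction: making the truncation argument rigorous requires care that the limiting $(Y,Z)$ genuinely satisfies \eqref{mjareq:BSDEtheo2} (the singular integral $\int_0^T \lambda_s f(Y_s) ds$ must be shown integrable, not merely a.s.\ finite), and this is precisely where $f' \geq \delta$ on $(-\infty,0]$ and $\mathbb{E}[\Lambda_t] < \infty$ enter — the former to get a lower bound on $f$ that pins down the boundary behavior and rules out the pathologies of Section \ref{mjarsection:affine}, the latter to pass the singular term to the limit on compact subintervals $[0,T-\varepsilon]$ before letting $\varepsilon \to 0$.
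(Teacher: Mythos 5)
Your overall architecture matches the paper's: necessity of $A\equiv 0$ via Proposition \ref{mjarprop:expo1}, a priori bounds by linearizing $f$, a truncation $\lambda\wedge n$ with classical solutions passed to the limit for existence, uniqueness by writing $f(Y^1)-f(Y^2)=\kappa\,\delta Y$ and reducing to the affine case, and It\^o on $|Y|^2$ between a stopping time and $T$ for the BMO bound. However, there is a genuine error in your a priori estimates: the sign of $Y$ is wrong. You claim that applying the argument of Lemma \ref{mjarlemma:expo} to $\tilde Y=e^{-\Lambda}Y$ with the non-negative forcing $\varphi$ yields $Y\geq 0$. It does not: the drift of $\tilde Y$ is $e^{-\Lambda_t}\bigl[\varphi_t+\lambda_t\bigl(f(Y_t)-Y_t\bigr)\bigr]$, the sum of a non-negative term and a non-positive term, so no supermartingale (or submartingale) conclusion is available, and the claimed conclusion is in fact false. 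The correct statement is $-T\|\varphi\|_\infty\leq Y\leq 0$: writing $f(Y_t)=Y_t\int_0^1 f'(\theta Y_t)\,d\theta=:\tilde\lambda_t Y_t/\lambda_t$ with $\tilde\lambda\geq 0$ and invoking the representation of Proposition \ref{mjarprop:affine1} gives $Y_t=-\mathbb{E}\bigl[\int_t^T e^{-\int_t^s\tilde\lambda_u du}\varphi_s\,ds\,\big\vert\,\mathcal F_t\bigr]\leq 0$, and then $f'\geq\delta$ on $(-\infty,0]$ yields $\tilde\lambda\geq\delta\lambda$ and the lower bound. (This also pinpoints where the hypothesis $f'\geq\delta$ for $x\leq 0$ actually enters, which your sketch leaves vague; it is likewise what guarantees $\int_0^T\tilde\lambda_s\,ds=\infty$ in the uniqueness step.)

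The sign error propagates: your inequality $0\leq f(Y_s)\leq Y_s$, used in the BMO step, is vacuous since $Y\leq 0$. The BMO estimate is nonetheless repairable and close to the paper's: with $Y\leq 0$ one has $f(Y)\leq f(0)=0$, hence $Y_sf(Y_s)\lambda_s\geq 0$, so in the It\^o identity the term $-2\int_\tau^T Y_sf(Y_s)\lambda_s\,ds$ is simply non-positive and can be discarded, giving $\mathbb{E}\bigl[\int_\tau^T\|Z_s\|^2ds\,\vert\,\mathcal F_\tau\bigr]\leq -2\,\mathbb{E}\bigl[\int_\tau^T\varphi_sY_s\,ds\,\vert\,\mathcal F_\tau\bigr]\leq 2T^2\|\varphi\|_\infty^2$; your alternative of bounding $\mathbb{E}\bigl[\int_\tau^T\lambda_s|f(Y_s)|\,ds\,\vert\,\mathcal F_\tau\bigr]$ from the equation also works once the signs are corrected, because $f(Y)$ has constant sign. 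Finally, your existence sketch leaves the hardest part — continuity of the limit $Y$ up to and including $T$ (the paper uses a Cauchy estimate in $\mathbb{S}^2$ on $[0,t_0]$, the uniform bound $|Y^n_t|\leq(T-t)\|\varphi\|_\infty$ at $T$, Dini, and the Barlow--Protter convergence theorem to identify the finite-variation part) — essentially unaddressed, though you correctly identify that $\mathbb{E}[\Lambda_{t_0}]<\infty$ is what allows the truncated singular term to be passed to the limit on $[0,t_0]$.
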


\begin{proof}
We have seen in Proposition \ref{mjarprop:expo1} that the only possible value for $A$ to admit a solution is $0$. From now on, we assume that $A \equiv 0$.\\\\
\textbf{Step 1: some estimates\\}
We start with some estimates on the (possible) solution to the BSDE. Assume that there exists a solution  $(Y,Z)$ to Equation \eqref{mjareq:expo1}.
Since $\varphi$ is non-negative, $(Y,Z)$ is a sub-solution to the BSDE:
$$ \mathcal{Y}_t=0-\int_t^T \lambda_s f(\mathcal{Y}_s) ds-\int_t^T \mathcal{Z}_s dW_s, \quad t\in [0,T],$$
which admits $(0,0)$ as unique solution by Lemma \ref{mjarlemma:expo}.
Indeed, this sub-solution property is classical for ($L^2$) Lipschitz BSDE and follows from the comparison theorem. However, here the BSDE \eqref{mjareq:sansphi} is not Lipschitz due to the unboundedness of $\lambda$. In our context the result can be proved explicitly. Since $f(0)=0$ the BSDE \eqref{mjareq:expo1} can be written as\footnote{by: $f(x)-f(y)=(x-y) \int_0^1 f'(y+\theta (x-y)) d\theta$, $(x,y) \in \mathbb{R}^2$}
\begin{equation}
\label{mjareq:delta}
Y_t =0-\int_t^T \tilde{\lambda}_sY_s ds -\int_t^T Z_s dW_s - \int_t^T \varphi_s ds, \quad t\in [0,T],
\end{equation}
with $\tilde \lambda_t:=\lambda_t \int_0^1 f'(\theta Y_t) d\theta$ which is non-negative.
Following the lines of Proposition \ref{mjarprop:affine1} with $\lambda$ replaced by $\tilde\lambda$, and using the non-negativity of $\varphi$, we get that
$$ Y_t =-\mathbb{E}\left[\int_t^T e^{-\int_t^s \tilde{\lambda}_u du} \varphi_s ds \vert \mathcal{F}_t\right] \leq 0. $$
From the non-positivity of $Y$, we can deduce that $\tilde \lambda \geq \delta \lambda$ from which we get that
$$Y_t\geq-\mathbb{E}\left[\int_t^T e^{-\delta \int_t^s \lambda_u du} \varphi_s ds \vert \mathcal{F}_t\right], \quad t\in [0,T].$$
To summarize, we have proven that
\begin{equation}
\label{mjareq:expoesti}
-(T-t) \|\varphi\|_\infty \leq -\mathbb{E}\left[\int_t^T e^{-\delta\int_t^s \lambda_u du} \varphi_s ds \vert \mathcal{F}_t\right] \leq Y_t \leq 0, \quad \forall t \in [0,T], \; \mathbb{P}-a.s..
\end{equation}
We now prove that the process $\int_0^\cdot Z_s dW_s$ is a BMO-martingale. Let $\tau$ be any stopping time such that $\tau\leq T$. By It\^o's formula, we have that
$$ |Y_\tau|^2 = 0-2 \int_\tau^T \varphi_s Y_s ds - 2 \int_\tau^T Y_s Z_s dW_s -\int_\tau^T \|Z_s\|^2 ds -2\int_\tau^T Y_s f(Y_s) \lambda_s ds.$$
Since $Y$ is bounded and $Z$ is an element of $\mathbb{H}^{1}(\mathbb{R}^d)$, the stochastic integral process is a true martingale, and since $Y$ is non-positive, the last term of the previous expression is non-positive. As a consequence, it holds that
$$ \mathbb{E}\left[\int_\tau^T \|Z_s\|^2 ds \vert \mathcal{F}_\tau \right] \leq -2 \mathbb{E}\left[\int_\tau^T \varphi_s Y_s ds \vert \mathcal{F}_\tau \right] \leq 2 T^2\|\varphi\|_\infty^2,$$
So the claim is proved.\\\\
\textbf{Step 2: existence\\}
Now, we prove the existence of a solution for the BSDE \eqref{mjareq:expo1}. For any positive integer $n$, we set $\lambda_\cdot^n:=\lambda_\cdot \wedge n$, $\tilde{f}(x):=f(x) \textbf{1}_{\{[-T \|\varphi\|_\infty, 0]\}}(x) +f(-T \|\varphi\|_\infty) \textbf{1}_{\{(-\infty,-T \|\varphi\|_\infty]\}}(x)$, and $(Y^n,Z^n)$ the unique (classical) solution in $\mathbb{S}^2 \times \mathbb{H}^2(\mathbb{R}^d)$ to the BSDE
\begin{equation}
\label{mjareq:Yapproxim}
Y_t^n = 0-\int_t^T (\varphi_s +\tilde f(Y_s^n) \lambda_s^n) ds -\int_t^T Z_s^n dW_s, \quad t\in [0,T].
\end{equation}
It is clear that this equation admits a unique solution since $\tilde f$ is Lipschitz continuous and $\lambda^n$ is bounded. In addition, by definition, $\tilde f(Y_s^n) \leq 0$, and so $Y_t^n \geq -\|\varphi\|_{\infty} (T-t)$. Thus $(Y^n,Z^n)$ solves the same equation with $\tilde f$ replaced by $\hat f(x):=f(x) \textbf{1}_{\{x\geq 0\}}$. Note that $\hat f(x) \leq x$ for any $x$ in $\mathbb{R}$. Since $\varphi$ is non-negative, $Y^n$ is a classical sub-solution to the BSDE \eqref{mjareq:sansphi} with $f$ replaced by $\hat f$, and so by Lemma \ref{mjarlemma:expo} we deduce that $Y_t^n\leq 0$. Thus
\begin{equation}
\label{mjareq:temp12}
|Y_t^n| \leq (T-t) \|\varphi\|_\infty, \quad \forall t \in [0,T], \; \mathbb{P}-a.s..
\end{equation}
Hence we can re-write Equation \eqref{mjareq:Yapproxim} as:
\begin{equation}
\label{mjareq:Yapproximbis}
Y_t^n = 0-\int_t^T (\varphi_s +f(Y_s^n) \lambda_s^n) ds -\int_t^T Z_s^n dW_s, \quad t\in [0,T].
\end{equation}
Repeating the same argument used in the previous step we can prove that
\begin{equation}
\label{mjareq:boundZn}
\sup_{n} \mathbb{E}\left[\int_0^T \|Z_t^n\|^2 dt\right] <\infty.
\end{equation}
By comparison theorem for Lipschitz BSDEs the sequence $(Y^n)_n$ is non-decreasing. Hence it converges pointwise to some element $Y:=\limsup_{n\to \infty} Y^n$. We would like to point out at this stage that by construction $Y$ takes values in $[-T\|\varphi\|_\infty,0]$.
In view of Dini's theorem, to obtain convergence uniformly in time, we need to prove that $Y$ is continuous. This is done in two steps. Fix $0<t_0<T$, $n\geq 1$ and $p,q \geq n$. We show that the sequence $(Y_n \textbf{1}_{[0,t_0]})_n$ is a Cauchy sequence in $\mathbb{S}^2$.
Let $\delta Y:=Y^p-Y^q$, $\delta Z:=Z^p-Z^q$. It\^o's formula gives for every $t\in [0,t_0]$ that
\begin{equation}
\label{mjareq:apriori1}
|\delta Y_t|^2 + \int_t^{t_0} \|Z_s\|^2 ds \leq |\delta Y_{t_0}|^2 -2 \int_t^{t_0} \delta Y_s f(Y_s^q) (\lambda_s^p-\lambda_s^q) ds - 2 \int_t^{t_0} \delta Y_s \delta Z_s dW_s,
\end{equation}
where we have used the fact that $\delta Y_s (f(Y_s^p)-f(Y_s^q))\geq 0$ since $f$ is non-decreasing. From this relation we deduce in particular for $t=0$ that
\begin{equation} \label{mjareq:apriori2} \mathbb{E}\left[\int_0^{t_0}
\|Z_s\|^2 ds\right]  \leq C \mathbb{E}\left[ |\delta Y_{t_0}|^2 +
\int_0^{t_0} |\lambda_s^p-\lambda_s^q| ds \right],
\end{equation}
since $Y^p$ and $Y^q$ are uniformly (in $p,q$) bounded. Taking the supremum over $[0,t_0]$ in Relation \eqref{mjareq:apriori1} leads to
\begin{align*}
&\mathbb{E}[\sup_{t\in [0,t_0]} |\delta Y_t|^2] \\
&\leq C \left(\mathbb{E}[|\delta Y_{t_0}|^2] +\mathbb{E}\left[\int_0^{t_0} |\delta Y_s f(Y_s^q)| |\lambda_s^p-\lambda_s^q| ds + \sup_{t\in [0,t_0]} \left|\int_t^{t_0} \delta Y_s \delta Z_s dW_s\right|\right]\right)\\
&\leq C \left(\mathbb{E}[|\delta Y_{t_0}|^2] +\mathbb{E}\left[\int_0^{t_0} |\lambda_s^p-\lambda_s^q| ds\right] + \mathbb{E}\left[\left(\int_0^{t_0} |\delta Y_s|^2 \|\delta Z_s\|^2 ds\right)^{1/2}\right]\right)\\
&\leq C \left(\mathbb{E}[|\delta Y_{t_0}|^2] +\mathbb{E}\left[\int_0^{t_0} |\lambda_s^p-\lambda_s^q| ds\right]\right) + \frac12 \mathbb{E}\left[\sup_{t\in [0,t_0]} |\delta Y_t|^2\right] +\frac{C^2}{2} \mathbb{E}\left[\int_0^{t_0} \|\delta Z_s\|^2 ds\right],
\end{align*}
where we have used the fact that $|\delta Y_s f(Y_s^q)|$ is bounded uniformly in $p,q$, the
Burkholder inequality and the inequality $C a b \leq \frac12 a^2+
\frac{C^2 b^2}{2}$. Combining the previous estimate with Estimate
\eqref{mjareq:apriori2} proves that
$$ \mathbb{E}[\sup_{t\in [0,t_0]} |\delta Y_t|^2] \leq C \left(\mathbb{E}[|\delta Y_{t_0}|^2] +\mathbb{E}\left[\int_0^{t_0} |\lambda_s^p-\lambda_s^q| ds\right]\right),$$
where $C$ does not depend on $p,q$. Recalling the definition of $\delta Y=Y^p-Y^q$ it follows that
$$ \lim_{n\to \infty} \sup_{p,q\geq n} \mathbb{E}[\sup_{t\in [0,t_0]} |\delta Y_t|^2] \leq C\lim_{n\to \infty} \left(\mathbb{E}[|Y_{t_0}^n-Y_{t_0}|^2] +\mathbb{E}\left[\int_0^{t_0} |\lambda_s^n-\lambda_s| ds\right]\right) =0,$$
by Lebesgue's dominated convergence Theorem (since $\mathbb{E}[\Lambda_{t_0} ]<\infty$)\footnote{Here we did not use the classical a priori estimates for Lipschitz BSDEs since they would lead to an estimate of the form $\mathbb{E}\left[\int_0^{t_0} |\lambda_s^p-\lambda_s^q|^2 ds\right]$ which is not compatible with our $L^1$ assumption: $\mathbb{E}[\Lambda_{t_0} ]<\infty$.}. Hence $(Y^n \textbf{1}_{[0,t_0]})_n$ is a Cauchy sequence in $\mathbb{S}^2$ which thus converges to $Y \textbf{1}_{[0,t_0]}$. So $Y$ is continuous on $[0,t_0]$ for any $t_0<T$. It remains to prove that $Y$ is continuous at $T$. Let $\varepsilon>0$. By Inequality \eqref{mjareq:temp12} it holds that
\begin{align*}
|Y_{T-\varepsilon}| = \lim_{n \to \infty} |Y^n_{T-\varepsilon}| \leq \varepsilon \|\varphi\|_\infty,
\end{align*}
proving that $Y$ is continuous at $T$. Hence, $(Y^n)_n$ is a non-decreasing sequence of continuous bounded processes converging to a continuous process $Y$, thus by Dini's Theorem, $(Y_n)_n$ converges in $\mathbb{S}^2$ to $Y$.\\\\
\noindent
We now prove that $Y$ together with a suitable process $Z$ solves the BSDE \eqref{mjareq:expo1}. To this end, we aim at applying \cite[Theorem 1]{mjarBarlow_Protter}. We have obtained already that $\lim_{n\to\infty} \mathbb{E}[\sup_{t\in [0,T]} |Y_t^n-Y_t|]=0$.
To satisfy the assumptions of \cite[Theorem 1]{mjarBarlow_Protter}, we have to prove that for every $n$
\begin{equation}
\label{mjareq:BPa1}
\sup_n \mathbb{E}\left[\left(\int_0^T \|Z_s^n\|^2 ds\right)^{1/2}\right] \leq C
\end{equation}
(which by Burkholder's inequality implies that $\mathbb{E}\left[\sup_{t\in [0,T]} \left|\int_0^t Z_s^n dW_s\right|\right] \leq C$ for every $n$) and that
\begin{equation}
\label{mjareq:BPa2}
\sup_n \mathbb{E}\left[\int_0^T \left|\lambda_s^n f(Y_s^n) \right| ds\right] \leq C, \quad \forall n\geq 1,
\end{equation}
since the process $\int_0^\cdot \lambda_s^n f(Y_s^n) ds$ is non-increasing (recall that $Y^n\leq 0$ and the assumptions on $f$).
Relation \eqref{mjareq:BPa1} is a direct consequence of \eqref{mjareq:boundZn}. With this estimate at hand we can deduce Relation \eqref{mjareq:BPa2}. Indeed, using Equation \eqref{mjareq:Yapproximbis} and the uniform estimates on the $Y^n$ obtained above we deduce that
\begin{align*}
\mathbb{E}\left[\int_0^T \vert \lambda_s^n f(Y_s^n) \vert ds\right]&= \mathbb{E}\left[\left\vert \int_0^T \lambda_s^n f(Y_s^n) ds\right\vert\right]\\
&= \mathbb{E}\left[\left\vert Y_0^n + \int_0^T \varphi_s ds +\int_0^T Z_s^n dW_s\right\vert\right]\leq C, \quad n \geq 1,
\end{align*}
where $C$ depends only on $T$ and $\|\varphi\|_\infty$ (and not on $n$).
Thus, by  \cite[Theorem 1]{mjarBarlow_Protter}, $Y$ writes down as $Y_t=A_t+\int_0^t \varphi_s ds +\int_0^t Z_s dW_s$, with $Z \in \mathbb{H}^1(\mathbb{R}^d)$, and
\begin{equation}
\label{mjareq:limA}
\lim_{n\to \infty} \mathbb{E}[\sup_{t\in [0,T]} |A_t-\int_0^t \lambda_s^n f(Y_s^n) ds| ] =0.
\end{equation}
We now identify the process $A$. We proceed in two steps: first we prove that $A_t=\int_0^t f(Y_s) \lambda_s ds$ for $t<T$ and then we prove the relation for $t=T$. Fix $t<T$. We have that
\begin{align*}
&\left|\int_0^t f(Y^n_s) (\lambda_s^n-\lambda_s) ds \right\vert\\
&\leq C \int_0^t |\lambda_s^n-\lambda_s| ds \to_{n\to\infty} 0, \quad \mathbb{P}-a.s.
\end{align*}
by the monotone convergence theorem, since the $Y^n$ are uniformly bounded and $\Lambda_t<\infty$, $\mathbb{P}$-a.s. Hence up to a subsequence,
$$ \lim_{n\to \infty} \left|A_t - \int_0^t f(Y^n_s) \lambda_s ds\right|=0.$$
Recalling that $Y^n\leq Y$, we have that
\begin{align*}
&\left|\int_0^t (f(Y^n_s)-f(Y_s)) \lambda_s ds \right\vert\\
&\leq C \int_0^t |Y^n_s-Y_s| \lambda_s ds \to_{n\to\infty} 0
\end{align*}
where once again we have used monotone convergence Theorem. This leads to
$$ A_t = \int_0^t f(Y_s) \lambda_s ds,\quad \mathbb{P}-a.s.$$
for any $t<T$. The relation for $t=T$ follows from the continuity of $A$ by \eqref{mjareq:limA}. Finally according to Definition \ref{mjarsolution} it remains to prove Relation \eqref{mjareq:BSDEtheo2}. This is done as follows by combining the monotone convergence theorem together with \eqref{mjareq:BPa2} and \eqref{mjareq:limA}:
$$ \mathbb{E}\left[\int_0^T |f(Y_s)| \lambda_s ds\right] = \lim_{t\to T} \mathbb{E}\left[\int_0^t |f(Y_s)| \lambda_s ds\right] = \lim_{t\to T} \mathbb{E}[|A_t|] <\infty.$$
\textbf{Step 3: uniqueness\\}
Assume there exist two solutions $(Y^1,Z^1)$ and $(Y^2,Z^2)$ to the BSDE \eqref{mjareq:expo1}. Then, the difference processes $(\delta Y:=Y^1-Y^2,\delta Z:=Z^1-Z^2)$ satisfies
$$\delta Y_t=0-\int_t^T \lambda_s (f(Y_s^1)-f(Y_s^2)) ds -\int_t^T \delta Z_s dW_s, \quad t\in [0,T].$$
From the existence part we know that both processes $Y^1$ and $Y^2$ are uniformly bounded. As a consequence the mapping $f$ restricted to the set $[-T\|\varphi\|_\infty,0]$ has a non-negative derivative. Hence the equation re-writes as:
$$\delta Y_t=0-\int_t^T \tilde{\lambda}_s \delta Y_s ds -\int_t^T \delta Z_s dW_s, \quad t\in [0,T],$$
where $\tilde{\lambda}_t:=\lambda_t \int_0^1f'(Y_t^2 +\theta (Y_t^1-Y_t^2)) d\theta$ is a non-negative process which satisfies $\int_0^t \tilde\lambda_s ds <\infty$ for $t < T$, $\mathbb{P}$-a.s. and $\int_0^T \tilde\lambda_s ds=\infty$, $\mathbb{P}$-a.s. Similarly to Proposition \ref{mjarprop:affine1} with $\lambda$ replaced with $\tilde \lambda$, we deduce that $(\delta Y,\delta Z)=(0,0)$ is the unique solution.
\end{proof}

\begin{remark}
Note that our previous result is not contained in the theory of monotonic drivers for BSDEs (see e.g. \cite{mjarPardoux_lingrowth,mjarBriandDelyonHuPardouxStoica} or \cite{mjarFanJiang}) where conditions of the form \cite[(H5) and (H1'')]{mjarBriandDelyonHuPardouxStoica} are not satisfied in our setting due to the non-integrability at $T$ of $\Lambda$.
\end{remark}

\section*{Acknowledgments}
The authors are very grateful to Nicole El Karoui and Jean Jacod and for helpful comments and discussions. The financial support of Chaire Markets in transition (FBF) is acknowledged.


\begin{thebibliography}{10}

\bibitem{mjarBarlow_Protter}
M.T. Barlow and P.~Protter.
\newblock On convergence of semimartingales.
\newblock In {\em S{\'e}minaire de {P}robabilit{\'e}s, {XXIV}, 1988/89}, volume
  1426 of {\em Lecture Notes in Math.}, pages 188--193. Springer, Berlin, 1990.

\bibitem{mjarBismut_78}
J.-M. Bismut.
\newblock Contr\^ole des syst{\`e}mes lin{\'e}aires quadratiques: applications
  de l'int{\'e}grale stochastique.
\newblock In {\em S{\'e}minaire de {P}robabilit{\'e}s, {XII} ({U}niv.
  {S}trasbourg, {S}trasbourg, 1976/1977)}, volume 649 of {\em Lecture Notes in
  Math.}, pages 180--264. Springer, Berlin, 1978.

\bibitem{mjarBriandDelyonHuPardouxStoica}
Ph. Briand, B.~Delyon, Y.~Hu, E.~Pardoux, and L.~Stoica.
\newblock {$L^p$} solutions of backward stochastic differential equations.
\newblock {\em Stochastic Process. Appl.}, 108(1):109--129, 2003.

\bibitem{mjarmjarConfortolaFuhrmanJacod_multijumps}
F.~Confortola, M.~Fuhrman, and J.~Jacod.
\newblock {BSDE}s and point processes: the multi-jump case.
\newblock In preparation.

\bibitem{mjarConfortolaFuhrmanJacod}
F.~Confortola, M.~Fuhrman, and J.~Jacod.
\newblock {BSDE}s and point processes: the one jump case.
\newblock In preparation.

\bibitem{mjarDellacherieMeyer_1}
C.~Dellacherie and P.-A. Meyer.
\newblock {\em Probabilities and potential}, volume~29 of {\em North-Holland
  Mathematics Studies}.
\newblock North-Holland Publishing Co., Amsterdam, 1978.

\bibitem{mjarDellacherieMeyer_2}
C.~Dellacherie and P.-A. Meyer.
\newblock {\em Probabilities and potential. {B}}, volume~72 of {\em
  North-Holland Mathematics Studies}.
\newblock North-Holland Publishing Co., Amsterdam, 1982.
\newblock Theory of martingales, Translated from the French by J. P. Wilson.

\bibitem{mjarElKaroui_redbook}
N.~El~Karoui.
\newblock Backward stochastic differential equations: a general introduction.
\newblock In {\em Backward stochastic differential equations ({P}aris,
  1995--1996)}, volume 364 of {\em Pitman Res. Notes Math. Ser.}, pages 7--26.
  Longman, 1997.

\bibitem{mjarElKaroui_Peng_Quenez}
N.~El~Karoui, S.~Peng, and M.~C. Quenez.
\newblock Backward stochastic differential equations in finance.
\newblock {\em Math. Finance}, 7(1):1--71, 1997.

\bibitem{mjarFanJiang}
S.-J. Fan and L.~Jiang.
\newblock Uniqueness result for the {BSDE} whose generator is monotonic in
  {$y$} and uniformly continuous in {$z$}.
\newblock {\em C. R. Math. Acad. Sci. Paris}, 348(1-2):89--92, 2010.

\bibitem{mjarJNR}
M.~Jeanblanc, N.~Nguyen~Hai, and A.~R{\'e}veillac.
\newblock Utility maximization with random horizon: a {BSDE} approach.
\newblock In preparation.

\bibitem{mjarPardoux_lingrowth}
E.~Pardoux.
\newblock B{SDE}s, weak convergence and homogenization of semilinear {PDE}s.
\newblock In {\em Nonlinear analysis, differential equations and control
  ({M}ontreal, {QC}, 1998)}, volume 528 of {\em NATO Sci. Ser. C Math. Phys.
  Sci.}, pages 503--549. Kluwer Acad. Publ., Dordrecht, 1999.

\bibitem{mjarPardoux_Peng90}
E.~Pardoux and S.~Peng.
\newblock Adapted solution of a backward stochastic differential equation.
\newblock {\em Systems Control Lett.}, 14(1):55--61, 1990.

\end{thebibliography}
\end{document}